\newtheorem{thm}{Theorem}
\newtheorem{lem}{Lemma}
\newtheorem{rem}{Remark}
\newtheorem{dfn}{Definition}
\newcommand{\<}{\langle}
\renewcommand{\>}{\rangle}
\begin{document}
\title{More Tales of Hoffman: bounds for the vector chromatic number of a graph}

\author{Pawel Wocjan\thanks{\texttt{wocjan@cs.ucf.edu}, Department of Computer Science, University of Central Florida, USA}  \quad Clive Elphick\thanks{\texttt{clive.elphick@gmail.com}, School of Mathematics, University of Birmingham, Birmingham, UK} \quad David Anekstein\thanks{\texttt{aneksteind@gmail.com}}}

\maketitle

\abstract{Let $\chi(G)$ denote the chromatic  number of a graph and $\chi_v(G)$ denote the vector chromatic number. For all graphs $\chi_v(G) \le \chi(G)$ and for some graphs $\chi_v(G) \ll \chi(G)$. Galtman proved that Hoffman's well-known lower bound for $\chi(G)$ is in fact a lower bound for $\chi_v(G)$. We prove that two more spectral lower bounds for $\chi(G)$ are also lower bounds for $\chi_v(G)$. We then use one of these bounds to derive  a new characterization of $\chi_v(G)$.}

\section{Introduction}\label{sec:1}

For any graph $G$ let $V$ denote the set of vertices where $|V| = n$, $E$ denote the set of edges where $|E| = m$, $A$ denote the adjacency matrix, $\chi(G)$ denote the chromatic number and $\omega(G)$ the clique number. Let $\mu_1 \ge \mu_2 \ge ... \ge \mu_n$ denote the eigenvalues of $A$ and let $s^+$ and $s^-$ denote the sum of the squares of the positive and negative eigenvalues of $A$, respectively. Let $\overline{G}$ denote the complement of $G$. 

Let $D$ be the diagonal matrix of vertex degrees, and let $L = D - A$ denote the Laplacian of $G$ and $Q = D  + A$ denote the signless Laplacian of $G$. The eigenvalues of $L$ are $\lambda_1 \ge \ldots \ge \lambda_n = 0$ and the eigenvalues of $Q$ are $\delta_1 \ge \ldots \ge \delta_n$.

\section{Vector chromatic numbers and theta functions}\label{sec:two}

In 1979 Lov\'asz \cite{lovasz79} defined the theta function, $\vartheta(G)$,  that is now named after him, in order to upper bound the Shannon capacity, $c(G)$, of a graph, and proved that $c(C_5) = \vartheta(C_5) = \sqrt{5}$. He also proved that $\omega(G) \le \vartheta(\overline{G}) \le \chi(G)$. Schrijver and Szegedy subsequently defined variants of the Lov\'asz theta function, which are denoted $\vartheta'(G)$ and $\vartheta^+(G)$ respectively, where $\vartheta'(G) \le \vartheta(G) \le \vartheta^+(G)$. All three theta functions can be approximated to within a fixed $\epsilon$ in polynomial time using semidefinite programming (SDP), even though computing $\omega(G)$ and $\chi(G)$ is NP-hard.

In parallel with the use of these theta functions, various vector chromatic numbers were defined. In 1998 Karger \emph{et al} \cite{karger98} defined the vector chromatic number, $\chi_v(G)$, and the strict vector chromatic number, $\chi_{sv}(G)$, where $\chi_v(G) \le \chi_{sv}(G) \le \chi(G)$. There exist graphs for which $\chi_v(G) \ll \chi(G)$ \cite{feige04}. Karger \emph{et al} \cite{karger98}  also proved that $\chi_{sv}(G) = \vartheta(\overline{G})$,  and Godsil \emph{et al} \cite{godsil17} noted that $\chi_v(G) = \vartheta'(\overline{G}).$ Finally there is what is called the rigid vector chromatic number, $\chi_{rv}(G)$, and Roberson proved (see Section 6.7 of \cite{roberson13}) that $\chi_{rv}(G) = \vartheta^+(\overline{G})$. So to summarise

\begin{equation}
\omega(G) \le \chi_v(G) = \vartheta'(\overline{G}) \le \chi_{sv}(G) = \vartheta(\overline{G}) \le \chi_{rv}(G) = \vartheta^+(\overline{G}) \le \chi(G).
\end{equation}

In this paper we  focus on lower bounds for $\chi_v(G)$ so it is only necessary to include the following definition.

\begin{dfn}[Vector chromatic number $\chi_v(G)$]\label{def:chiv}
Given a graph $G = (V, E)$ on $n$ vertices, and a real number $k \ge 2$, a vector $k$-coloring of $G$ is an assignment of unit vectors $u_i\in\mathbb{R}^n$ to each vertex $i \in V$, such that for any two adjacent vertices $i$ and $j$ 
\begin{equation}\label{eq:inner}
\langle u_i , u_j \rangle \le  -\frac{1}{k - 1}.
\end{equation}
The vector chromatic number $\chi_v(G)$ is the smallest real number $k$ for which a vector $k$-coloring exists. The vector $k$-coloring can always be assumed to be in dimension $n$. 
\end{dfn}

\section{Spectral lower bounds for chromatic numbers}\label{sec:three}

Most of the known spectral lower bounds for the chromatic number can be summarised as follows:

\begin{equation}\label{bounds}
1 + \max\left(\frac{\mu_1}{|\mu_n|} , \frac{2m}{2m - n\delta_n} , \frac{\mu_1}{\mu_1 - \delta_1 + \lambda_1} ,   \frac{s^\pm}{s^\mp}\right) \le \chi(G) , 
\end{equation}
where, reading from left to right, these bounds are due to Hoffman \cite{hoffman70}, Lima \emph{et al} \cite{lima11}, Kolotilina \cite{kolotilina11}, and Ando and Lin \cite{ando15}.  It should be noted that Nikiforov \cite{nikiforov07} pioneered the use of non-adjacency matrix eigenvalues to bound $\chi(G)$.

Note that for regular graphs the first three bounds are equal. Some of these bounds are further generalised in Elphick and Wocjan \cite{elphick15}, which for reasons discussed in Section {\ref{sec:five}} we exclude here. Several of these bounds equal two for all bipartite graphs.

Wocjan and Elphick \cite{wocjan18} strengthened (\ref{bounds}) by proving that the Ando and Lin bound is a lower bound for the quantum chromatic number, $\chi_q(G)$, with arbitrary Hermitian weight matrices. Wocjan and Elphick \cite{wocjan182} further strengthened (\ref{bounds}) by proving that the Kolotilina and Lima \emph{et al} bounds are lower bounds for the vectorial chromatic number, $\chi_{vect}(G) = \lceil \vartheta^+(\overline{G}) \rceil$, again with arbitrary Hermitian weight matrices.  

Galtman \cite{galtman00} provides eight characterizations of $\chi_v(G)$. The fifth of these is that:
\begin{equation}\label{eq:galtman}
\chi_v(G) = 1 + \max_W \left(\frac{\mu_1(W)}{|\mu_n(W)|}\right),
\end{equation}
where $W$ is an arbitrary non-negative\footnote{Non-negative means that all matrix entries are non-negative.} weight matrix. This shows that the Hoffman bound is a lower bound for the vector chromatic number, $\chi_v(G) = \vartheta'(\overline{G})$, but for non-negative weight matrices only. This bound was also independently obtained by Bilu \cite{bilu06}.

We prove below that the bounds due to Lima \emph{et al} and Kolotilina are also lower bounds for $\chi_v(G)$. It is straightforward to amend our proofs to show that the Lima \emph{et al} and the Kolotilina bounds remain lower bounds for $\chi_v(G)$ with arbitrary non-negative weight matrices. In the case of the Lima \emph{et al} bound this involves replacing $2m$ in the numerator with the sum of the off-diagonal entries of the weight matrix and $2m$ in the denominator with the trace of the weight matrix. In Section {\ref{sec:four}} we use the Lima \emph{et al} bound to prove a new characterization of the vector chromatic number. As discussed by Galtman \cite{galtman00}, removing the non-negativity constraint would provide lower bounds for $\chi_{sv}(G)$, which can exceed $\chi_v(G)$.

\section{Proof for the Lima bound}\label{sec:four}

\begin{thm}\label{thm:lima}
For any graph $G$
\begin{equation}
1 + \frac{2m}{2m- n\delta_n} \le \chi_v(G).
\end{equation}
\end{thm}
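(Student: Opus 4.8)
The plan is to work directly from Definition~\ref{def:chiv} rather than routing through Galtman's characterization~(\ref{eq:galtman}): the signless Laplacian $Q = D+A$ appears so naturally as a quadratic form that a direct double-counting argument looks cleaner than hunting for an explicit non-negative weight matrix realizing the bound via~(\ref{eq:galtman}). First I would fix a vector coloring attaining the optimum, i.e. unit vectors $u_1,\dots,u_n \in \mathbb{R}^n$ with $\langle u_i,u_j\rangle \le -1/(k-1)$ for every edge $ij$, where $k = \chi_v(G)$. Writing $t = 1/(k-1) > 0$, a short rearrangement shows that it suffices to prove the single inequality $n\delta_n \le 2m(1-t)$: this simultaneously certifies that the denominator $2m - n\delta_n$ is positive and unwinds to $k \ge 1 + 2m/(2m-n\delta_n)$.

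The central idea is to estimate $S := \sum_{ij\in E}\|u_i+u_j\|^2$ in two ways. For the upper bound I would expand $\|u_i+u_j\|^2 = 2 + 2\langle u_i,u_j\rangle$ and apply the coloring constraint edge by edge, giving $S \le \sum_{ij\in E}(2-2t) = 2m(1-t)$. For the lower bound I would decompose $S$ coordinatewise: letting $x^{(c)} \in \mathbb{R}^n$ collect the $c$-th coordinates of all the $u_i$, the standard signless-Laplacian identity $\sum_{ij\in E}(y_i+y_j)^2 = y^\top Q y$ yields $S = \sum_{c} (x^{(c)})^\top Q x^{(c)}$. Applying the Rayleigh bound $(x^{(c)})^\top Q x^{(c)} \ge \delta_n\,\|x^{(c)}\|^2$ to each term and summing gives $S \ge \delta_n \sum_c \|x^{(c)}\|^2 = \delta_n \sum_i \|u_i\|^2 = n\delta_n$, where the final equality is precisely the normalization that each $u_i$ is a unit vector. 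Chaining the two estimates produces $n\delta_n \le S \le 2m(1-t)$, which is exactly what was needed.

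I expect the only genuinely creative step to be the choice of the functional $S$ together with the recognition that its coordinatewise expansion is controlled by $\delta_n$; once that is in hand, each remaining step is a one-line Rayleigh-quotient or normalization computation, and the positivity of $2m - n\delta_n$ falls out for free since $t>0$ forces $n\delta_n < 2m$. The same double counting should then go through verbatim with $A$ replaced by an arbitrary non-negative symmetric weight matrix $W$ and $Q$ replaced by the associated weighted signless Laplacian, in which case $2m$ becomes the total off-diagonal weight in the numerator and the trace of the weighted signless Laplacian in the denominator, recovering the weighted form described in Section~\ref{sec:three}.
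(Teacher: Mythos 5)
Your proof is correct and is essentially the paper's own argument in different packaging: your functional $S=\sum_{ij\in E}\|u_i+u_j\|^2$ is exactly the quadratic form $\langle v,(Q\otimes I_n)v\rangle$ with $v=\sum_i e_i\otimes u_i$ that the paper bounds via the Rayleigh principle, and your coordinatewise estimate $\sum_c (x^{(c)})^\top Q\, x^{(c)}\ge \delta_n\sum_c\|x^{(c)}\|^2$ is the same Rayleigh step, since the least eigenvalue of $Q\otimes I_n$ is $\delta_n$. The only difference is presentational (coordinates in place of the tensor product), and your concluding remark on the weighted generalization matches the paper's as well.
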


\begin{proof}
Let $u_1,\ldots,u_n\in\mathbb{R}^n$ be the unit vectors on which the vector chromatic number $\chi_v$ is attained. That is $\langle u_i, u_j \rangle \le -1/(\chi_v - 1)$ for all $ij \in E$. 

Let $e_1,\ldots,e_n$ denote the standard basis of $\mathbb{R}^n$. Define the vector
\begin{equation}
v = \sum_{i=1}^n e_i \otimes u_i \in \mathbb{R}^n \otimes \mathbb{R}^n\,.
\end{equation}
Let $q_{ij}$ denote the entries of the signless Laplacian $Q$. We have
\begin{eqnarray}
n \cdot \delta_n 
& = &
\<v, v\>  \cdot \delta_n \\
& \le &
\< v, (Q\otimes I_n) v \> \\
& = &
\sum_{i,j=1}^n q_{ij} \cdot \< u_i, u_j \> \\
& = &
\sum_{i=1}^n d_i  + 2 \sum_{ij\in E}^n \< u_i, u_j \> \\
& \le &
2m - 2m \cdot \frac{1}{\chi_v - 1}\,.
\end{eqnarray}
We use the Rayleigh principle $\delta_n\le \<v, (Q\otimes I_n) v\> / \<v, v\>$. We then use 
$Q=D+A$, that is, $q_{ii}=d_i$, $q_{ij}=1$ for all $ij\in E$ and $q_{ij}=0$ for all $ij\not\in E$ and $i\neq j$. We finally use
$\< u_i, u_j \> \le - 1/ (\chi_v - 1)$ for all $ij\in E$.
\end{proof}


We also present an alternative proof of Theorem~\ref{def:chiv}. This proof does not make use of the definition of the vector chromatic number in terms of certain vectors as in Definition~\ref{def:chiv}.  Instead, we rely on the third characterization of $\chi_v(G)$ in Section 3 of \cite{galtman00} which is as follows.

\begin{equation}\label{eq:galtmanHoffman}
    \chi_v(G) = \max_B \sum_{i,j = 1}^n b_{ij},
\end{equation}
where $B = (b_{ij})$ is a non-negative symmetric positive semi-definite matrix such that $\mathrm{tr}(B) = 1$ and $b_{ij} = 0$ if $i$ and $j$ are distinct non-adjacent vertices. We can now reformulate the above characterization of $\chi_v(G)$ so that the Lima \emph{et al} bound arises as a special case.


\begin{thm}
For any graph $G$
\begin{equation}
\chi_v(G) = 1 + \max_W \left(\frac{\sum_{i\neq j} w_{ij}}{\mathrm{tr}(W) - n\lambda_{\min}(W)}\right),    
\end{equation}
where $W = (w_{ij})$ is a non-negative weight matrix and $\lambda_{\min}(W)$ denotes the minimum eigenvalue of $W$.
\end{thm}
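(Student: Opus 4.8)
The plan is to recognize the ratio on the right as Galtman's semidefinite program (\ref{eq:galtmanHoffman}) in disguise, so that the proof can proceed purely through that characterization without ever invoking the vectors $u_i$ of Definition~\ref{def:chiv}. Throughout, $W=(w_{ij})$ ranges over symmetric non-negative weight matrices of $G$, meaning $w_{ij}=0$ whenever $i$ and $j$ are distinct and non-adjacent. I will abbreviate the objective by $f(W)=\bigl(\sum_{i\neq j}w_{ij}\bigr)/\bigl(\mathrm{tr}(W)-n\lambda_{\min}(W)\bigr)$ and write $g(W)=\sum_{i,j}w_{ij}$. Note first that $\mathrm{tr}(W)-n\lambda_{\min}(W)=\sum_i\bigl(\lambda_i(W)-\lambda_{\min}(W)\bigr)\ge 0$, vanishing only when $W$ is a scalar matrix, a degenerate case I exclude so that $f$ is well defined.

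The engine of the proof is two invariances of $f$. It is plainly invariant under positive scaling $W\mapsto cW$, and it is also invariant under diagonal shifts $W\mapsto W+tI$, because $\sum_{i\neq j}w_{ij}$ is unchanged while $\mathrm{tr}$ gains $tn$ and $\lambda_{\min}$ gains $t$, so the denominator is preserved. First I would use these to send an arbitrary $W$ to the normalized matrix $\hat W=\bigl(W-\lambda_{\min}(W)I\bigr)/\bigl(\mathrm{tr}(W)-n\lambda_{\min}(W)\bigr)$. The crucial observation is that $\hat W$ is again a non-negative weight matrix: its off-diagonal entries are positive multiples of those of $W$, and its diagonal entries stay non-negative because $\lambda_{\min}(W)\le\min_i w_{ii}$ for any symmetric matrix. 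By construction $\hat W$ is positive semidefinite with $\lambda_{\min}(\hat W)=0$ and $\mathrm{tr}(\hat W)=1$, and $f(\hat W)=f(W)=g(\hat W)-1$. Consequently $1+\max_W f(W)=\max_{\hat W}g(\hat W)$, where $\hat W$ now ranges only over the \emph{singular} members of Galtman's feasible set. Since that is a subset of the set in (\ref{eq:galtmanHoffman}), this gives at once $1+\max_W f(W)\le\chi_v(G)$.

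For the reverse inequality I would take an optimizer $B$ of (\ref{eq:galtmanHoffman}), so that $g(B)=\chi_v(G)$ and hence $\sum_{i\neq j}b_{ij}=\chi_v(G)-1$. Evaluating the objective at $B$ gives $f(B)=\bigl(\chi_v(G)-1\bigr)/\bigl(1-n\lambda_{\min}(B)\bigr)$. Because $B$ is positive semidefinite one has $0\le n\lambda_{\min}(B)<1$ (strictness since $B=I/n$ is infeasible for $\chi_v\ge 2$), so $f(B)\ge\chi_v(G)-1$ and therefore $1+\max_W f(W)\ge\chi_v(G)$. Combining the two bounds yields the claimed equality. Equivalently, the shift argument shows the optimal $B$ may be taken singular, so nothing is lost in restricting to singular matrices in the previous paragraph.

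The invariance computations are routine; the step that deserves genuine care is the normalization, where I must check that subtracting $\lambda_{\min}(W)I$ leaves every entry non-negative — this is precisely where the elementary fact $\lambda_{\min}(W)\le\min_i w_{ii}$ enters — and that the denominator is strictly positive so the ratio makes sense. The main conceptual hurdle is thus verifying that the feasible region of the ratio problem collapses onto Galtman's region after scaling and shifting; once that is in place both inequalities are short. As a final consistency check I would specialize to $W=Q=D+A$: then $\sum_{i\neq j}w_{ij}=2m$, $\mathrm{tr}(W)=2m$ and $\lambda_{\min}(W)=\delta_n$, recovering Theorem~\ref{thm:lima}.
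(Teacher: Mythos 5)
Your proposal is correct and follows essentially the same route as the paper: you construct exactly the paper's normalization $B=\bigl(W-\lambda_{\min}(W)I\bigr)/\bigl(\mathrm{tr}(W)-n\lambda_{\min}(W)\bigr)$ and substitute it into Galtman's characterization (\ref{eq:galtmanHoffman}). You merely make explicit what the paper leaves implicit — the entrywise non-negativity of $B$ via $\lambda_{\min}(W)\le\min_i w_{ii}$, and the reverse inequality obtained by evaluating the ratio at an optimizer of (\ref{eq:galtmanHoffman}) (where, as a small wording fix, $B=I/n$ is feasible but suboptimal rather than infeasible) — so the argument is a careful expansion of the paper's two-sentence proof.
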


\begin{proof}
Let $W$ be an arbitrary non-negative symmetric matrix.  Then, the matrix 
\begin{equation}
    B=\frac{W - \lambda_{\min}(W) I}{\mathrm{tr}(W) - n \lambda_{\min}(W)}
\end{equation}
is positive semidefinite and $\mathrm{tr}(B) = 1$.  Substituting $B$ into (\ref{eq:galtmanHoffman}) yields the characterization. Setting $W$ equal to the signless Laplacian $Q$ yields the Lima \emph{et al} bound as a special case.

\end{proof}


\section{Proof for the Kolotilina bound}\label{sec:five}

We briefly recall some standard concepts and results that are needed to prove that the Kolotilina bound is a lower bound for the vector chromatic number.  
Let $X,Y\in\mathbb{C}^{n\times n}$ be two arbitrary Hermitian matrices with eigenvalues $\alpha_1 \ge \alpha_2 \ge \ldots \ge \alpha_n$ and $\beta_1 \ge \beta_2 \ge \ldots \ge \beta_n$, respectively. We say that
$X$ majorizes $Y$, denoted by $X\succeq Y$, if
\begin{equation}
\sum_{i=1}^\ell \alpha_i \ge \sum_{i=1}^\ell \beta_i 
\end{equation}
for all $\ell\in\{1,\ldots,n-1\}$ and
\begin{equation}
\sum_{i=1}^n \alpha_i = \sum_{i=1}^n \beta_i \,.
\end{equation}
Recall that the Schur product of two matrices $M,N\in\mathbb{C}^{n\times n}$, denoted by $M\circ N$ is defined to be the matrix whose entries are the products of the corresponding entries of $M$ and $N$. We say that a Hermitian matrix $M\in\mathbb{C}^{n\times n}$ is positive semidefinite if all its eigenvalues are non-negative. 

The Schur product $M\circ N$ of any two positive semidefinite matrices $M$ and $N$ is positive semidefinite.  Let $u_1,\ldots,u_n\in\mathbb{C}^n$ be a collection of $n$ arbitrary unit vectors.  Their Gram matrix $\Phi=(\Phi_{ij})\in\mathbb{C}^{n\times n}$, whose entries $\Phi_{ij}$ are the inner products $\<u_i,u_j\>$, is positive semidefinite.

We say that a matrix $M\in\mathbb{R}^{n\times n}$ is non-negative if all its entries are non-negative.  Similarly, we say a vector $v\in\mathbb{R}^n$ is non-negative if all its entries are non-negative. Note that $M v$ is non-negative whenever $M$ and $v$ are non-negative. For two matrices $M,N\in\mathbb{R}^{n\times n}$, we write $M\ge N$ to indicate that $M-N$ is non-negative.
For any two non-negative matrices $M,N\in\mathbb{R}^{n\times n}$ and non-negative vector $v\in\mathbb{R}^n$, $M\ge N$ implies $Mv \ge Nv$.

Let $M\in\mathbb{R}^{n\times n}$ be an arbitrary symmetric, non-negative, and irreducible matrix. Then, the eigenvector corresponding to the largest eigenvalue can be chosen to have positive entries. This follows from the proof of the Perron-Frobenius theorem for non-negative irreducible matrices \cite[Chapter 8]{meyer}.


For our purposes, it is useful to reformulate the defining condition of a $k$-vector coloring as follows.

\begin{rem}\label{rem}
Note that condition (\ref{eq:inner}) in the definition of a $k$-vector coloring of $G=(V,E)$ can be equivalently formulated as
\begin{equation}\label{eq:equivalent}
\Phi \circ (D - A) \ge D + \frac{1}{k-1} A\,,
\end{equation}
where $D$ denotes the diagonal matrix of vertex degrees $d_i$, $A=(a_{ij})$ the adjacency matrix, and $\Phi=(\Phi_{ij})$ the Gram matrix of the $n$ unit vectors $u_i\in\mathbb{R}^n$ of the $k$-vector coloring, that is, $\Phi_{ij}=\<u_i,u_j\> \le - 1 / (k-1)$ for all  $ij\in E$.
\end{rem}

This reformulation enables us to leverage the well-known Perron-Frobenius theorem because the entries of the matrix on the right hand side of (\ref{eq:equivalent}) are all non-negative.

In the book \cite{zhan02}, correlation matrices are positive semidefinite matrices with ones along the diagonal. It is easy to see that a Gram matrix (of unit vectors) is a correlation matrix and vice versa.\footnote{Every positive semidefinite matrix $\Phi$ can be written as $\Phi=B^* B$ for some $B$ \cite[Exercise I.2.2]{bhatia}. When $\Phi$ is a correlation matrix, then the columns of $B$ of this decomposition are the desired unit vectors whose pairwise inner products form $\Phi$. The other direction is obvious because a Gram matrix is positive semidefinite and its diagonal entries are all one when the corresponding vectors are unit vectors.} Besides the Perron-Frobenius theorem, the result in \cite[Corollary 2.15]{zhan02} plays a central role in showing that the spectral bounds are also lower bounds on the vector chromatic number. We decided to include a proof of this key result. 

\begin{lem}\label{lem:GX}
Let $\Phi\in\mathbb{C}^{n\times n}$ be an arbitrary correlation matrix. Then, for any Hermitian matrix $X\in\mathbb{C}^{n\times n}$ 
\begin{equation}
X \succeq \Phi \circ X\,.
\end{equation}
\end{lem}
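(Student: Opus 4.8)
The plan is to realize Schur multiplication by a correlation matrix as a doubly stochastic completely positive map and then apply the Ky Fan maximum principle. Since $\Phi$ is a correlation matrix, it is the Gram matrix of unit vectors $u_1,\dots,u_n\in\mathbb{C}^n$, so that $\Phi_{ij}=\langle u_i,u_j\rangle=\sum_{\alpha=1}^n (u_i)_\alpha\,\overline{(u_j)_\alpha}$. Introducing the diagonal matrices $D_\alpha=\mathrm{diag}\bigl((u_1)_\alpha,\dots,(u_n)_\alpha\bigr)$ for $\alpha=1,\dots,n$, an entrywise check gives the representation $\Phi\circ X=\sum_{\alpha=1}^n D_\alpha X D_\alpha^*$. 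The unit-norm condition $\sum_\alpha |(u_i)_\alpha|^2=1$ translates into $\sum_\alpha D_\alpha D_\alpha^*=\sum_\alpha D_\alpha^* D_\alpha=I$, so the map $X\mapsto\Phi\circ X$ is both unital and trace preserving.

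First I would dispose of the trace condition in the definition of majorization. Because every diagonal entry of $\Phi$ equals one, $\mathrm{tr}(\Phi\circ X)=\sum_i\Phi_{ii}X_{ii}=\mathrm{tr}(X)$, which gives $\sum_{i=1}^n\lambda_i(\Phi\circ X)=\sum_{i=1}^n\lambda_i(X)$, the equality case $\ell=n$.

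Next, for each $\ell\in\{1,\dots,n-1\}$ I would bound the sum of the $\ell$ largest eigenvalues. Let $P$ be the orthogonal projection onto the span of the top $\ell$ eigenvectors of $\Phi\circ X$, so that $\sum_{i=1}^\ell\lambda_i(\Phi\circ X)=\mathrm{tr}\bigl(P(\Phi\circ X)\bigr)$. Using the representation above together with cyclicity of the trace yields $\mathrm{tr}\bigl(P(\Phi\circ X)\bigr)=\mathrm{tr}(YX)$, where $Y=\sum_\alpha D_\alpha^* P D_\alpha$. The matrix $Y$ is Hermitian; since the map is positive it is positive semidefinite, and since the map is unital and $I-P$ is positive semidefinite the matrix $I-Y$ is positive semidefinite as well, so all eigenvalues of $Y$ lie in $[0,1]$. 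Moreover $\mathrm{tr}(Y)=\mathrm{tr}\bigl(P\sum_\alpha D_\alpha D_\alpha^*\bigr)=\mathrm{tr}(P)=\ell$. It then remains to invoke the relaxed Ky Fan principle, namely that the maximum of $\mathrm{tr}(YX)$ over Hermitian $Y$ with eigenvalues in $[0,1]$ and $\mathrm{tr}(Y)=\ell$ equals $\sum_{i=1}^\ell\lambda_i(X)$, its optimum being attained at a rank-$\ell$ projection. This gives $\sum_{i=1}^\ell\lambda_i(\Phi\circ X)\le\sum_{i=1}^\ell\lambda_i(X)$ and completes the majorization.

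The routine parts are the entrywise verification of $\Phi\circ X=\sum_\alpha D_\alpha X D_\alpha^*$ and the trace identity. The step requiring most care is the passage from $P$ to $Y$: I must check that $Y$ genuinely belongs to the Ky Fan feasible set, namely that it is Hermitian, has spectrum in $[0,1]$, and satisfies $\mathrm{tr}(Y)=\ell$, since it is precisely these three properties, which are consequences of unitality and trace preservation, that allow the eigenvalues of $\Phi\circ X$ to be replaced by those of $X$. An alternative that sidesteps the variational characterization would be to cite Uhlmann's theorem that a doubly stochastic completely positive map majorizes its input, but I prefer the self-contained Ky Fan argument.
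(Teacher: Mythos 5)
Your proof is correct, but it takes a genuinely different route from the paper's. The paper works directly with the spectral decompositions $X=\sum_j \alpha_j P_j$ and $\Phi\circ X=\sum_i \beta_i Q_i$, shows that the transition values $p_{ij}=\mathrm{Tr}(Q_i(\Phi\circ P_j))$ form a doubly stochastic matrix carrying the spectrum of $X$ to that of $\Phi\circ X$ (with nonnegativity of the $p_{ij}$ supplied by the Schur product theorem), and then invokes the Hardy--Littlewood--P\'olya theorem. You instead exhibit the Schur multiplier as a completely positive map in Kraus form, $\Phi\circ X=\sum_\alpha D_\alpha X D_\alpha^*$ with diagonal $D_\alpha$ built from the Gram vectors of $\Phi$; because the $D_\alpha$ are diagonal, unitality and trace preservation coincide and both follow from the unit-norm condition, and you then obtain each partial-sum inequality from the relaxed Ky Fan principle applied to $Y=\sum_\alpha D_\alpha^* P D_\alpha$. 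Your verification that $Y$ lies in the Ky Fan feasible set is exactly the point that needs care, and you handle it correctly: $Y\ge 0$ termwise, $I-Y=\sum_\alpha D_\alpha^*(I-P)D_\alpha\ge 0$ from $\sum_\alpha D_\alpha^* D_\alpha=I$, and $\mathrm{tr}(Y)=\mathrm{tr}(P)=\ell$ by cyclicity; the relaxed Ky Fan principle itself is standard (a greedy argument in the eigenbasis of $X$, since the coefficients $\langle v_j, Y v_j\rangle$ lie in $[0,1]$ and sum to $\ell$). Each approach buys something: your Kraus representation makes positivity of the map manifest, so you never need the Schur product theorem, which the paper uses as a black box; the paper's argument, by contrast, produces the classical doubly stochastic matrix explicitly and reduces to HLP, which is in effect the special case of the Uhlmann-type statement (doubly stochastic positive maps induce majorization) that you mention as an alternative. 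The two proofs are thus two faces of the same structural fact --- Schur multiplication by a correlation matrix is a doubly stochastic positive map --- the paper passing through a doubly stochastic matrix acting on eigenvalue vectors, you through a variational characterization of partial eigenvalue sums.
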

\begin{proof}
Set $Y=\Phi\circ X$. Let
\begin{equation}
X = \sum_{j=1}^n \alpha_j P_j\,,\quad\mbox{and}\quad Y=\sum_{i=1}^n \beta_i Q_i
\end{equation}
denote the spectral decompositions of $X$ and $Y$, respectively.  We assume that the eigenvalues of $X$ and $Y$ in the above decompositions are ordered in non-increasing order, that is, 
$\alpha_1 \ge \alpha_2 \ge \ldots \ge \alpha_n$ and $\beta_1 \ge \beta_2 \ge \ldots \ge \beta_n$, respectively. We also assume that the orthogonal projectors $P_j$ and $Q_i$ are one-dimensional. Note that $\sum_{j=1}^n P_j = \sum_{i=1}^n Q_i = I$.

For an arbitrary $i\in\{1,\ldots,n\}$, we can use the spectral decompositions to write $\beta_i$ as follows
\begin{equation}
\beta_i = \sum_{j=1}^n \mathrm{Tr}(Q_i (\Phi \circ P_j)) \, \alpha_j\,.
\end{equation}
For $i, j\in\{1,\ldots,n\}$, define the values $p_{ij} = \mathrm{Tr}(Q_i (\Phi \circ P_j))$ so that $\beta_i = \sum_{j=1}^n p_{ij} \alpha_j$.
Note that equivalently $p_{ij} = v_i^\dagger (\Phi \circ P_j) v_i\ge 0$, where $v_i\in\mathbb{C}^n$ with $Q_i = v_i v_i^\dagger$. Therefore, these values are non-negative because the Schur product $\Phi\circ P_j$ of the two positive semidefinite matrices  $\Phi$ and $P_j$ is positive semidefinite.

We now show that the matrix $P=(p_{ij})$ is doubly stochastic, that is, all row and column sums are equal to $1$.
We have $\mathrm{Tr}(\Phi \circ M)=\mathrm{Tr}(M)$ for all matrices $M\in\mathbb{C}^{n\times n}$ and $\Phi\circ I=I$ because $\Phi$ has ones along the diagonal. These two simple observations and the properties of spectral decompositions imply that
\begin{equation}
\sum_{i=1}^n p_{ij} = \mathrm{Tr}(\Phi\circ P_j) = \mathrm{Tr}(P_j) = 1\,
\end{equation}
and
\begin{equation}
\sum_{j=1}^n p_{ij} = \mathrm{Tr}(Q_i (\Phi \circ I)) = \mathrm{Tr}(Q_i) = 1\,.
\end{equation}
Hence $(\beta_1,\ldots,\beta_n)^T = P (\alpha_1,\ldots,\alpha_n)^T$ for some doubly stochastic matrix $P$.
The Hardy-Littlewood-P{\'o}lya theorem now implies that the spectrum of $X$ majorizes the spectrum of $\Phi\circ X$, that is, 
$(\alpha_1,\ldots,\alpha_n) \succeq (\beta_1,\ldots,\beta_n)$.
\end{proof}

Using the above results, we establish the following theorem.

\begin{thm}\label{thm:vector coloring}
Assume that $A=(a_{ij})\in\mathbb{R}^{n\times n}$ is irreducible and that there exists a correlation matrix $\Phi\in\mathbb{R}^{n\times n}$  such that 
\begin{equation}
\Phi \circ (D - A) \ge D + \frac{1}{k-1} A,
\end{equation}
which is the condition (\ref{eq:equivalent}) in Remark~\ref{rem}. Then, we have
\begin{equation}
\lambda_{\max}(D - A) \ge \lambda_{\max} \left(D+\frac{1}{k-1} A \right)\,.
\end{equation}
\end{thm}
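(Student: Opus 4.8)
The plan is to prove the eigenvalue inequality by combining the majorization result of Lemma~\ref{lem:GX} with the monotonicity of the largest eigenvalue and the Perron--Frobenius structure guaranteed by irreducibility. The key point is that the hypothesis gives us an \emph{entrywise} inequality between two non-negative matrices, and the conclusion is a comparison of their \emph{largest eigenvalues}; the bridge between these is that entrywise domination of non-negative matrices implies spectral-radius domination, provided we can locate the relevant eigenvalues at the top of the spectrum.

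First I would apply Lemma~\ref{lem:GX} with the Hermitian matrix $X = D-A$ and the correlation matrix $\Phi$, which yields the majorization $D-A \succeq \Phi \circ (D-A)$. Since majorization compares partial sums of eigenvalues in decreasing order, the first partial sum gives
\begin{equation}
\lambda_{\max}(D-A) \ge \lambda_{\max}\bigl(\Phi \circ (D-A)\bigr).
\end{equation}
So it remains to show $\lambda_{\max}(\Phi \circ (D-A)) \ge \lambda_{\max}(D + \tfrac{1}{k-1}A)$, i.e.\ to pass the entrywise hypothesis through to the largest eigenvalues.

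Next I would exploit the entrywise inequality $\Phi \circ (D-A) \ge D + \tfrac{1}{k-1}A$ together with irreducibility. Write $N = D + \tfrac{1}{k-1}A$; this is a non-negative, symmetric, irreducible matrix (it has the same off-diagonal support as $A$), so by Perron--Frobenius its largest eigenvalue $\lambda_{\max}(N)$ has an associated eigenvector $v$ that can be chosen with strictly positive entries. The strategy is the standard Rayleigh/Perron comparison: using that $v$ is non-negative and the matrices are non-negative, the inequality $\Phi\circ(D-A) \ge N$ gives $(\Phi\circ(D-A))\,v \ge N v = \lambda_{\max}(N)\,v$ entrywise. Then
\begin{equation}
\lambda_{\max}\bigl(\Phi\circ(D-A)\bigr) \ge \frac{v^\dagger (\Phi\circ(D-A)) v}{v^\dagger v} \ge \frac{v^\dagger N v}{v^\dagger v} = \lambda_{\max}(N),
\end{equation}
where the first step is the Rayleigh principle (valid since $\Phi\circ(D-A)$ is Hermitian) and the second step uses that $v$ is non-negative and $v^\dagger(\Phi\circ(D-A))v \ge v^\dagger N v$ follows from the entrywise domination applied against the non-negative vector $v$. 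Chaining the two displayed inequalities completes the argument.

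The main obstacle I anticipate is justifying the Rayleigh-type comparison cleanly, since $\Phi\circ(D-A)$ is Hermitian but need not be non-negative entrywise (the $\Phi_{ij}$ can be negative), so one cannot directly invoke Perron--Frobenius for \emph{that} matrix. The fix is to only use Perron--Frobenius on the genuinely non-negative, irreducible matrix $N$ to obtain the positive test vector $v$, and then feed $v$ into the variational characterization of $\lambda_{\max}$ for the Hermitian matrix $\Phi\circ(D-A)$; the entrywise inequality $\Phi\circ(D-A)\ge N$ transfers to the quadratic forms precisely because $v \ge 0$, which is the role played by the observation recorded earlier in the excerpt that $M \ge N$ and $v \ge 0$ imply the corresponding ordering of the images and hence of $v^\dagger M v$ and $v^\dagger N v$. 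I would double-check that irreducibility of $A$ is exactly what is needed to make $v$ strictly positive, so that no degenerate cancellation occurs in the Rayleigh quotient.
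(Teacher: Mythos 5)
Your proposal is correct and follows essentially the same route as the paper's proof: Lemma~\ref{lem:GX} gives $\lambda_{\max}(D-A)\ge\lambda_{\max}\bigl(\Phi\circ(D-A)\bigr)$, and then Perron--Frobenius applied to the non-negative irreducible matrix $D+\tfrac{1}{k-1}A$ supplies a non-negative eigenvector that is fed into the Rayleigh quotient of $\Phi\circ(D-A)$ to transfer the entrywise inequality to the largest eigenvalues. Your closing observation---that Perron--Frobenius must be invoked for $D+\tfrac{1}{k-1}A$ rather than for $\Phi\circ(D-A)$, since the latter need not be entrywise non-negative---is exactly the point the paper emphasizes after its proof.
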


\begin{proof}
We have the following facts:
\begin{eqnarray}
D - A              &\succeq & \Phi \circ (D - A) \label{eq:1} \\
\Phi \circ (D - A) &\ge & D + \frac{1}{k-1} A \label{eq:2}
\end{eqnarray}
Observe that the matrix $D+1/(k-1)A$ is symmetric, non-negative, and irreducible because $A$ has these properties and $D$ is a diagonal matrix with non-negative entries.  As discussed at the beginning of this section,  the Perron-Frobenius theorem implies that the eigenvector corresponding to the maximum eigenvalue can be chosen to have non-negative entries. Denote this eigenvector by $w$. Using (\ref{eq:2}) and $w\ge 0$, we obtain
\begin{equation}\label{eq:perron}
\<w, \Big( \Phi \circ (D-A) \Big) w \> \ge 
\<w, \Big(   D + \frac{1}{k-1} A \Big) w\> = 
\lambda_{\max}\Big( D + \frac{1}{k-1} A \Big)\,.
\end{equation}
Using the Rayleigh principle, we obtain 
\begin{equation}
\lambda_{\max}\Big( \Phi \circ (D-A) \Big) \ge \<w, \Big( \Phi \circ (D-A) \Big) w\>\,.
\end{equation}
Finally, (\ref{eq:1}) implies $\lambda_{\max}(D-A) \ge \lambda_{\max}\Big( \Phi \circ (D-A) \Big)$. Combining all the inequalities yields the proof.
\end{proof}

Note that it is essential that the eigenvector corresponding to the maximum eigenvalue has non-negative entries. Otherwise, we cannot establish the inequality in (\ref{eq:perron}). Therefore, it does not seem to be possible to generalize these proof techniques to include other eigenvalues besides the maximum eigenvalue as in \cite{elphick15}.

We can now prove that the Kolotilina bound is a lower bound for $\chi_v(G)$. 

\begin{thm}
For any\footnote{We may assume without loss of generality that the adjacency matrix is irreducible, which is equivalent to the graph being connected. The result is true for each connected component.} graph $G$
\begin{equation}
1 + \frac{\mu_1}{\mu_1 - \delta_1 + \lambda_1} \le \chi_v(G).
\end{equation}
\end{thm}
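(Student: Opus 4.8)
\emph{Proof proposal.} The plan is to instantiate Theorem~\ref{thm:vector coloring} at $k = \chi_v(G)$ and then read off the stated bound using a single Weyl-type inequality. First I would set $k = \chi_v(G)$ and let $\Phi$ be the Gram matrix (equivalently, correlation matrix) of the unit vectors attaining the vector chromatic number. By Remark~\ref{rem} this $\Phi$ satisfies the hypothesis $\Phi \circ (D - A) \ge D + \frac{1}{k-1} A$, so Theorem~\ref{thm:vector coloring} applies and gives
\begin{equation}
\lambda_1 = \lambda_{\max}(D - A) \ge \lambda_{\max}\left(D + \frac{1}{k-1} A\right).
\end{equation}
Here $G$ is assumed connected, hence $A$ irreducible, as permitted by the footnote; the general result follows componentwise.

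Next I would bound $\lambda_{\max}\big(D + \frac{1}{k-1} A\big)$ from below in terms of $\delta_1$ and $\mu_1$. Writing the signless Laplacian as $Q = D + A$, one has the identity $D + \frac{1}{k-1} A = Q - \frac{k-2}{k-1} A$. Since $k = \chi_v(G) \ge 2$, the scalar $\frac{k-2}{k-1}$ is non-negative, so the perturbation $-\frac{k-2}{k-1} A$ has least eigenvalue $-\frac{k-2}{k-1}\mu_1$. Weyl's inequality $\lambda_{\max}(X + Y) \ge \lambda_{\max}(X) + \lambda_{\min}(Y)$, applied with $X = Q$ and $Y = -\frac{k-2}{k-1} A$, then yields
\begin{equation}
\lambda_{\max}\left(D + \frac{1}{k-1} A\right) \ge \delta_1 - \frac{k-2}{k-1}\mu_1.
\end{equation}

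Combining the two displays gives $\lambda_1 \ge \delta_1 - \frac{k-2}{k-1}\mu_1$, which I would rearrange using $\frac{k-2}{k-1} = 1 - \frac{1}{k-1}$ into
\begin{equation}
\mu_1 - \delta_1 + \lambda_1 \ge \frac{\mu_1}{k-1}.
\end{equation}
Because $G$ has an edge we have $\mu_1 > 0$, and $k - 1 \ge 1 > 0$, so the right-hand side is strictly positive; thus the denominator $\mu_1 - \delta_1 + \lambda_1$ is automatically positive and no separate sign hypothesis is needed. Dividing through and adding $1$ produces
\begin{equation}
\chi_v(G) = k \ge 1 + \frac{\mu_1}{\mu_1 - \delta_1 + \lambda_1},
\end{equation}
as required. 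The only delicate point is the Weyl step: one must pair $\lambda_{\max}$ of $Q$ with $\lambda_{\min}$ (not $\lambda_{\max}$) of the perturbation, and use non-negativity of $\frac{k-2}{k-1}$ to identify $-\frac{k-2}{k-1}\mu_1$ as its least eigenvalue. Everything else is routine rearrangement, and pleasantly the positivity of $\mu_1 - \delta_1 + \lambda_1$ falls out of the argument rather than having to be assumed.
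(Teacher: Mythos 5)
Your proposal is correct and takes essentially the same route as the paper: the paper likewise applies Theorem~\ref{thm:vector coloring} with $k=\chi_v(G)$ (the hypothesis supplied by Remark~\ref{rem}) and then invokes Lemma~4 of \cite{elphick15} (Nikiforov's argument from \cite{nikiforov07}), whose content is exactly your step of writing $D+\frac{1}{k-1}A = Q-\frac{k-2}{k-1}A$, applying Weyl's inequality to get $\lambda_1 \ge \delta_1-\frac{k-2}{k-1}\mu_1$, and rearranging. The only difference is that you spell out the cited lemma in full (including the correct observation that positivity of $\mu_1-\delta_1+\lambda_1$ comes for free), whereas the paper leaves this to the references.
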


\begin{proof}
The proof is now identical to Lemma~4 in \cite{elphick15}, which was proved by Nikiforov in \cite{nikiforov07}, but with $\chi(G)$ replaced with $\chi_v(G)$. The Kolotilina bound for $\chi_v(G)$ therefore follows immediately when $A$ is the adjacency matrix and $D$ is the diagonal matrix of vertex degrees. The Hoffman bound for $\chi_v(G)$, proved by Galtman and Bilu, follows when $D$ is the zero matrix.
\end{proof}


\section{Extremal graphs}\label{sec:six}

A graph, $G$, is said to have a \emph{Hoffman coloring} if $\chi(G)$ equals the Hoffman bound. We have investigated graphs with $\chi_v(G) < \chi(G)$ and $\chi_v(G)$ equal to one or more of the bounds proved in this paper. We have found no irregular graph meeting these criteria.

For regular graphs, the Kolotilina and Lima \emph{et al} bounds equal the Hoffman bound, and there are numerous regular graphs for which $\chi_v(G) < \chi(G)$ and $\chi_v(G)$ equals the Hoffman bound. Such graphs can be said to have a \emph{Hoffman vector coloring}. For example the Clebsch graph has $\chi = 5$ and $\chi_v =$ Hoffman bound $= 8/3$; and the Kneser graph $K_{p,k}$ has $\chi = p - 2k + 2$ and $\chi_v = $ Hoffman bound $= p/k$. The orthogonality graph, $\Omega(n)$, has $\chi_v =$ Hoffman bound $= n$ and, for large enough $n$, $\chi$ is exponential in $n$. 

Godsil \emph{et al} \cite{godsil16} proved that any 1-homogeneous graph has $\chi_v =$ Hoffman bound. 1-homogeneous graphs are always regular and include distance regular (and thus strongly regular) and non-bipartite edge transitive graphs; and graphs which are both vertex and edge transitive.

\section{An open question}\label{sec:seven}

As discussed in Section 3, Ando and Lin \cite{ando15} proved a conjecture due to two of the authors \cite{wocjan13} that:

\[
1 + \max\left(   \frac{s^+}{s^-} , \frac{s^-}{s^+} \right) \le \chi(G).
\]

We have been unable to prove that  this bound is also a lower bound for $\chi_v(G)$. We have, however,  tested this question, using that $\chi_v(G) = \vartheta'(\overline{G})$ and SDP, against thousands of named graphs in the Wolfram Mathematica database and found no counter-example. We have also tested 10,000s of circulant graphs and found no counter-example.

Our code for testing this question is available in the GitHub repository \cite{repo}.

\subsection*{Acknowledgements}

This research has been supported in part by National Science Foundation Award 1525943. We would like to thank Ali Mohammadian for helpful comments.

\end{document}